\documentclass{amsart}
\usepackage{amsfonts,amssymb,amscd,amsmath,enumerate,verbatim,newlfont,calc}
\usepackage{graphicx}

\newtheorem{theorem}{Theorem}[section]

\newtheorem{corollary}[theorem]{Corollary}
\newtheorem{example}[theorem]{Example}
\theoremstyle{definition}
\newtheorem{definition}[theorem]{Definition}
\newtheorem{remark}[theorem]{Remark}
\theoremstyle{notations}

\begin{document}
\title{Topological Homotopy Groups}
\author{H. Ghane}
\author{Z. Hamed}
\author{B. Mashayekhy}
\author{H. Mirebrahimi}
\address{Department of Mathematics, Ferdowsi University of Mashhad, P.O.Box 1159-91775, Mashhad, Iran}
\keywords{Topological fundamental group, homotopy group,
$n$-semilocally simply connected} \subjclass[2000] {55Q05; 55U40;
54H11; 55P35.} \maketitle
\begin{abstract}
D. K. Biss (Topology and its Applications 124 (2002) 355-371)
introduced the topological fundamental group and presented some
interesting basic properties of the notion. In this article we
intend to extend the above notion to homotopy groups and try to
prove some similar basic properties of the topological homotopy
groups. We also study more on the topology of the topological
homotopy groups in order to find necessary and sufficient conditions
for which the topology is discrete. Moreover, we show that studying
topological homotopy groups may be more useful than topological
fundamental groups.
\end{abstract}
\section{Introduction and Motivation}
Historically, J. Dugundji [3] in 1950, put, for the first time, a
topology on fundamental groups of certain spaces and deduced a
classification theorem for connected covers of a space.

Recently, Biss [1] generalized the results announced by J.
Dugundji. He equipped the fundamental group of a pointed space
$(X,x)$ with the quotient topology inherited from
$Hom((S^1,1),(X,x))$ with compact-open topology and denoted by
$\pi_1^{top}(X,x)$. He proved among other things that
$\pi_1^{top}(X,x)$ is a topological group which is independent of
the base point in path components and $\pi_1^{top}$ is a functor
from the homotopy category of based spaces to the category of topological
groups which preserves the direct product. He showed  that
$\pi_1^{top}$ is discrete if and only if the space $X$ is
semilocally simply connected. However, P. Fabel [5] mentioned that path
connectedness and locally path connectedness of $X$ is necessary.

P. Fabel [4], using Biss' results, showed that the topological
fundamental groups can distinguish the homotopy type of topological
spaces when the algebraic structures fail to do. On the other hand,
in some cases, this topology does not be able to separate spaces in
terms of their homotopy types. For examples, if $X$ is locally
contractible, then $\pi_1^{top}(X)$ is discrete which is not
interesting. In these situations the higher homotopy groups seems to
be useful and this is a motivation to define a natural topology on
homotopy groups of $X$. This topology, if it is compatible with the
topology of fundamental groups, can also distinguish the spaces with
distinct homotopy types even though the topology of their
topological fundamental groups are the same.

 In this article, we are going to extend some basic results of Biss
by introducing a topology on $n$th homotopy group of a pointed space
$(X,x)$ as a quotient space of $Hom((I^n,\dot{I}^n),(X,x))$ equipped
with compact-open topology and denote it by $\pi_n^{top}(X,x)$. We
will show that $\pi_n^{top}(X,x)$ is a topological group which does
not depend on based point $x$ in a path component. We will also
prove that $\pi_n^{top}$ is a functor from the homotopy category of
pointed spaces to the category of topological groups which preserves
the direct product. Moreover, we present the notion of an
$n$-semilocally simply connected space and prove that the
discreteness of $\pi_n^{top}(X,x)$ implies  that $X$ is
$n$-semilocally simply connected. By giving an example, it is shown
that the converse is not true, in general. However, we show that the
$n$th homotopy group of a locally $n$-connected mertrizable space
$X$ is discrete.

Fabel in [5], clarified the relationship between the cardinality of
$\pi_{1}(X,x)$ and discreteness of $\pi_{1}^{top}(X,x)$ and deduced
that if $X$ is a connected separable metric space and
$\pi_{1}^{top}(X,x)$ is discrete, then $\pi_{1}(X,x)$ is countable.
Here, we extend this result to higher homotopy groups and show that
the $n$th homotopy group of a connected, locally $n$-connected
separable metric space is countable. Finally, we give an example of
a metric space $X$ such that $\pi_1^{top}(X)$ is discrete whereas
$\pi_2^{top}(X)$ is not. This shows that studying topological
homotopy groups may be more useful than topological fundamental
groups.
\section{Topological Homotopy Groups}
Let $(X,x)$ be a pointed space. Then the space of continuous based
maps $Hom($ $(I^n,\dot{I}^n),(X,x))$ can be given the compact-open
topology; this topology has as a subbase the sets $\langle
K,U\rangle=\{f:(I^n,\dot{I}^n)\rightarrow(X,x)|f(K)\subseteq U\}$,
where $K$ ranges over all compact subsets of $I^n$ and $U$ ranges
over all open subsets of $X$. By considering the natural projection
\[Hom((I^n,\dot{I}^n),(X,x))\stackrel{p_n}{\twoheadrightarrow}[(I^n,\dot{I}^n),(X,x)]=\pi_n(X,x),\]
we are allowed to define a quotient topology on $\pi_n(X,x)$. By
$\pi_n^{top}(X,x)$ we mean the topological space $\pi_n(X,x)$
equipped with the above topology. In the following, we are going
to prove several basic properties of this topology.\\
\begin{theorem}
Let $(X,x)$ be a pointed space. Then $\pi_n^{top}(X,x)$ is a
topological group for all $n\geq 1$.
\end{theorem}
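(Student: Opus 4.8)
The plan is to verify the two defining continuity conditions for a topological group: that the multiplication $\mu\colon \pi_n^{top}(X,x)\times\pi_n^{top}(X,x)\to\pi_n^{top}(X,x)$ and the inversion $\iota\colon\pi_n^{top}(X,x)\to\pi_n^{top}(X,x)$ are continuous. My strategy is to realize both operations first at the level of the mapping space $Hom((I^n,\dot I^n),(X,x))$, abbreviated $H$, where the compact-open topology is directly accessible, and then push them down through the quotient map $p_n$. Write $I^n = I^n_-\cup I^n_+$ with $I^n_-=[0,\tfrac12]\times I^{n-1}$ and $I^n_+=[\tfrac12,1]\times I^{n-1}$, and let $\phi_\pm\colon I^n\to I^n_\pm$ be the affine rescalings in the first coordinate. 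Concatenation lifts to $\tilde\mu\colon H\times H\to H$ sending $(f,g)$ to the map equal to $f\circ\phi_-^{-1}$ on $I^n_-$ and to $g\circ\phi_+^{-1}$ on $I^n_+$; these agree on the shared slice $\{\tfrac12\}\times I^{n-1}$ because that slice is carried into $\dot I^n$ by both reparametrizations and hence sent to $x$, so $\tilde\mu(f,g)$ is continuous by the pasting lemma. Reflection lifts to $\tilde\iota(f)=f\circ r$, where $r(t_1,\dots,t_n)=(1-t_1,t_2,\dots,t_n)$ is a homeomorphism of $I^n$ preserving $\dot I^n$.

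Next I would check that $\tilde\mu$ and $\tilde\iota$ are continuous for the compact-open topology. For inversion this is immediate, since precomposition with a fixed continuous self-map is always continuous, so $\tilde\iota$ is in fact a homeomorphism. For concatenation it suffices to test a subbasic set $\langle K,U\rangle$: writing $K_\pm = K\cap I^n_\pm$, one has $\tilde\mu(f,g)\in\langle K,U\rangle$ exactly when $f\in\langle \phi_-^{-1}(K_-),U\rangle$ and $g\in\langle \phi_+^{-1}(K_+),U\rangle$, so $\tilde\mu^{-1}\langle K,U\rangle$ is a product of subbasic open sets and is open. Both $\phi_-^{-1}(K_-)$ and $\phi_+^{-1}(K_+)$ are compact, so these are legitimate subbasic sets, and $\tilde\mu$ is continuous.

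It remains to descend to the quotient. Since $p_n$ is continuous, the composites $p_n\circ\tilde\mu$ and $p_n\circ\tilde\iota$ are continuous, and the obvious squares relating $\tilde\mu,\tilde\iota$ to $\mu,\iota$ via $p_n$ commute (the group operations on $\pi_n(X,x)$ are precisely the classes of the concatenated and reflected representatives, which is well defined on homotopy classes by the standard homotopies). For inversion the continuity of $\iota$ follows at once from the universal property of the single quotient map $p_n$, since $\iota\circ p_n = p_n\circ\tilde\iota$ is continuous.

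The one genuinely delicate point is the continuity of $\mu$. Here the relevant commuting square carries $p_n\times p_n$ on the left, and to deduce continuity of $\mu$ from that of $p_n\circ\tilde\mu$ I need $p_n\times p_n$ to be a quotient map. This is exactly where the argument must be handled with care, since a product of quotient maps need not be a quotient map in general. The main work of the proof is therefore to justify this step, for instance by invoking Whitehead's theorem when one factor is locally compact, or by passing to the compactly generated reflection so that products of quotient maps remain quotient maps. I expect this to be the crux of the theorem and the place where hypotheses on $X$, or on the ambient category of spaces, may genuinely be required.
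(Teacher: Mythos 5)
As a standalone proof, your proposal is incomplete, and it is incomplete exactly where you say it is. Everything up to the descent step is correct and matches the paper's computations: your continuity argument for the lifted concatenation $\tilde\mu$ via the sets $\phi_\pm^{-1}(K_\pm)$ is the same as the paper's argument with its sets $K_1,K_2$ (the paper concatenates in the last coordinate rather than the first, an immaterial difference), and your treatment of inversion by precomposition with the reflection, followed by descent through the single quotient map $p_n$, is a clean version of the paper's $K^{-1}$ computation. But you then leave the continuity of the multiplication on $\pi_n^{top}(X,x)$ unproven, noting that it would follow if $p_n\times p_n$ were a quotient map; and the repairs you float do not apply here, since $Hom((I^n,\dot{I}^n),(X,x))$ need not be locally compact, and passing to the compactly generated reflection changes the topology on the quotient and hence changes the statement being proved.

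The substantive point, however, is that the gap you flagged is precisely the gap in the paper's own proof. The paper asserts that since $(p_n\times p_n)^{-1}m_n^{-1}(U)=\tilde{m}_n^{-1}p_n^{-1}(U)$ for every open $U$, ``it is enough to show that $\tilde{m}_n$ is continuous.'' That inference only gives openness of $(p_n\times p_n)^{-1}(m_n^{-1}(U))$; to conclude that $m_n^{-1}(U)$ itself is open one needs $p_n\times p_n$ to be a quotient map, which is exactly the point at issue, because a product of two quotient maps need not be a quotient map. This is not a reparable oversight in general: the argument descends from Biss's Proposition 1, and Fabel later showed that multiplication in the fundamental group of the Hawaiian earring, topologized with this quotient topology, is discontinuous, so the theorem as stated fails without further hypotheses (the modern fixes re-topologize the group, e.g.\ via the group reflection or compactly generated refinements). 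So you have not missed a step of the paper's argument; you have correctly located the step at which the paper's argument breaks, and your closing remark that hypotheses on $X$ or on the ambient category are genuinely required is exactly right.
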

\begin{proof}
In order to show that the multiplication is continuous, we consider
the following commutative diagram
\[\begin{picture}(340,50)
\put(0,40){$Hom((I^n,\dot{I}^n),(X,x))\times
Hom((I^n,\dot{I}^n),(X,x))$}
\put(250,40){$Hom((I^n,\dot{I}^n),(X,x))$}
\put(50,0){$\pi_n^{top}(X,x)\times\pi_n^{top}(X,x)$}
\put(275,0){$\pi_n^{top}(X,x),$} \put(205,44){\vector(1,0){40}}
\put(155,4){\vector(1,0){115}} \put(100,36){\vector(0,-1){25}}
\put(300,36){\vector(0,-1){25}} \put(305,20){\small
$p_n$}\put(65,20){\small $p_n\times p_n$} \put(220,46){\small
$\tilde{m}_n$} \put(210,6){\small $m_n$}
\end{picture}\]
 where $\tilde{m}_n$ is concatenation of $n$-loops, and $m_n$ is
the multiplication in $\pi_n(X,x)$. Since $(p_n\times
p_n)^{-1}m_n^{-1}(U)=\tilde{m}_n^{-1}p_n^{-1}(U)$ for every open
subset $U$ of $\pi_n^{top}(X,x)$, it is enough to show that
$\tilde{m}_n$ is continuous. Let $\langle K,U\rangle$ be a basis
element in $Hom((I^n,\dot{I}^n),(X,x))$. Put
$$K_1=\{(t_1,\ldots,t_n)|(t_1,\ldots,t_{n-1},\frac{t_n}{2})\in K\}$$
and $$K_2=\{(t_1,\ldots,t_n)|(t_1,\ldots,t_{n-1},\frac{t_n+1}{2})\in
K\}.$$ Then
\[\tilde{m}_n^{-1}(\langle K,U\rangle)=\{(f_1,f_2)|(f_1*f_2)(K)\subseteq U\}=\langle K_1,U\rangle\times\langle K_2,U\rangle\]
is open in $Hom((I^n,\dot{I}^n),(X,x))\times
Hom((I^n,\dot{I}^n),(X,x))$ and so $\tilde{m}_n$ is continuous. To
prove that the operation of taking inverse is continuous, let $K$ be
any compact subset of $I^n$ and put
$$K^{-1}=\{(t_1,\ldots,t_{n-1},1-t_n)|(t_1,\ldots,t_n)\in K\}.$$
Clearly an $n$-loop $\alpha$ is in $\cap_{i=1}^m\langle
K_i,U_i\rangle$ if and only if its inverse is in
$\cap_{i=1}^m\langle K_i^{-1},U_i\rangle$, where $\langle
K_i,U_i\rangle$ are basis elements in $Hom((I^n,\dot{I}^n),(X,x))$.
Hence the inverse map is continuous.
\end{proof}
From now on, when we are dealing with $\pi_n^{top}$, by the notion
$\cong$ we mean the isomorphism in the sense of topological groups.
The following result shows that the topological group $\pi_n^{top}$
is independent of the base point $x$ in the path component.
\begin{theorem}
Let $\gamma:I\rightarrow X$ be a path with $\gamma(0)=x$ and
$\gamma(1)=y$. Then $\pi_n^{top}(X,x)\cong\pi_n^{top}(X,y)$.
\end{theorem}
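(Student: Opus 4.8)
The plan is to realize the classical basepoint-change isomorphism at the level of mapping spaces and then push it through the quotient maps $p_n$. Recall the algebraic construction: given the path $\gamma$, one builds from a map $f\colon(I^n,\dot{I}^n)\to(X,y)$ a new map $\beta_\gamma(f)\colon(I^n,\dot{I}^n)\to(X,x)$ by compressing $f$ into a smaller concentric cube and using $\gamma$ on the surrounding collar to join the image of the inner boundary (the point $y=\gamma(1)$) out to the point $x=\gamma(0)$ on $\dot{I}^n$. Standard algebraic topology shows that $f\mapsto\beta_\gamma(f)$ respects homotopy classes, that the induced map $\beta_\gamma\colon\pi_n(X,y)\to\pi_n(X,x)$ is a group homomorphism, and that $\beta_{\bar\gamma}$, built from the reverse path $\bar\gamma$, is its inverse. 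Thus the only genuinely new content is the continuity of these maps as maps of topological spaces.

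First I would define the lift $\tilde{\beta}_\gamma\colon Hom((I^n,\dot{I}^n),(X,y))\to Hom((I^n,\dot{I}^n),(X,x))$ by the explicit collar formula above, so that its value on $\dot{I}^n$ is $\gamma(0)=x$ as required. To prove $\tilde{\beta}_\gamma$ is continuous I would use the exponential law: since $I^n$ is compact Hausdorff, the evaluation map $ev\colon Hom((I^n,\dot{I}^n),(X,y))\times I^n\to X$ is jointly continuous, and a map into a compact-open mapping space is continuous whenever its adjoint is. Hence it suffices to check that the adjoint evaluation $E(f,t)=\tilde{\beta}_\gamma(f)(t)$ is continuous on $Hom((I^n,\dot{I}^n),(X,y))\times I^n$. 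On the closed inner-cube region $E$ is the composite of $ev$ with a fixed continuous reparametrization of $t$, hence continuous; on the closed collar region $E$ is $(f,t)\mapsto\gamma(s(t))$ for a fixed continuous $s$, hence continuous; the two formulas agree on the common boundary, so the pasting lemma gives continuity of $E$ on all of the product. Therefore $\tilde{\beta}_\gamma$ is continuous, and likewise $\tilde{\beta}_{\bar\gamma}$.

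Next I would descend to the quotients. Writing $p_n^x$ and $p_n^y$ for the two projections, the compatibility $p_n^x\circ\tilde{\beta}_\gamma=\beta_\gamma\circ p_n^y$ holds because the collar construction carries homotopic maps to homotopic maps. Since $p_n^y$ is a quotient map and $p_n^x\circ\tilde{\beta}_\gamma$ is continuous, the universal property of quotient maps yields continuity of $\beta_\gamma\colon\pi_n^{top}(X,y)\to\pi_n^{top}(X,x)$; the same argument applied to $\bar\gamma$ gives continuity of $\beta_{\bar\gamma}$. As these two continuous homomorphisms are mutually inverse bijections, $\beta_\gamma$ is an isomorphism of topological groups, which is the assertion.

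I expect the main obstacle to be the continuity of $\tilde{\beta}_\gamma$ at the mapping-space level, since this is the only step where the compact-open topology is genuinely used; the exponential-law reduction to joint continuity of evaluation, combined with the pasting lemma, is the cleanest way around it, and one must take care that the inner-cube and collar regions are closed and cover $I^n$ so that the pasting lemma applies. The remaining steps, namely the homomorphism property and the inverse relation $\beta_{\bar\gamma}=\beta_\gamma^{-1}$, are the classical basepoint-change facts and require no new argument.
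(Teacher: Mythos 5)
Your proposal is correct, and its skeleton --- lift the basepoint-change map to the mapping spaces, prove the lift continuous, descend through the quotient maps $p_n$, and run the same argument for the reverse path to obtain a continuous inverse --- is exactly the paper's. The underlying map is also the same: your collar compression of $f$ with $\gamma$ on the surrounding annulus is precisely the paper's $\alpha\mapsto A_\alpha\circ r(-,1)$, where $A_\alpha$ equals $\alpha$ on $I^n\times\{0\}$ and runs along $\gamma^{-1}$ on $\dot{I}^n\times[0,1]$, and $r$ is the retraction of $I^n\times[0,1]$ onto $I^n\times\{0\}\cup\dot{I}^n\times[0,1]$; evaluating $r$ at level $1$ squeezes $\alpha$ into an inner cube and places the path on the collar. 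Where you genuinely differ is in the continuity of the lift, and your route is the stronger one. The paper argues by computing the \emph{image} $\tilde{\gamma}(\langle K,U\rangle)\subseteq\langle K,U\rangle$ and concluding continuity; but continuity concerns preimages of subbasic opens, not images, so as written that step is not a proof (and the containment itself is doubtful, since on points of $K$ landing in the collar the map $A_\alpha\circ r(-,1)$ takes values along $\gamma$, which need not lie in $U$). Your exponential-law argument --- evaluation is jointly continuous because $I^n$ is compact Hausdorff, the adjoint $E(f,t)$ is continuous on each of the two closed pieces, the formulas agree on the shared boundary where both give $\gamma(1)$, hence paste, and continuity of the adjoint always implies continuity of the map into the compact-open mapping space --- establishes rigorously exactly the step the paper glosses over. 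In short: same isomorphism and same quotient-descent strategy, but your proof of the one genuinely topological step is valid where the paper's displayed computation is not.
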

\begin{proof}
Let $\alpha$ be a base $n$-loop at $x$ in $X$. Define the following
map
\[A_\alpha:I^n\times\{0\}\cup\dot{I}^n\times[0,1]\rightarrow X\]
by $A_\alpha(s,0)=\alpha(s)$ for all $s\in I^n$ and
$A_\alpha(s,t)=\gamma^{-1}(t)$ for all $(s,t)\in
\dot{I}^n\times[0,1]$. By gluing lemma $A_\alpha$ is a continuous
map. Now, we define
\[\gamma_\#:\pi_n^{top}(X,x)\longrightarrow\pi_n^{top}(X,y)\]
by $[\alpha]\longmapsto[A_\alpha\circ r(-,1)]$, where
$r:I^n\times[0,1]\rightarrow I^n\times[0,1]\cup\dot{I}^n\times[0,1]$
is the retraction introduced in [7]. It can be shown that
$\gamma_\#$ is a group isomorphism (see [7]). Now, it is enough to
show that $\gamma_\#$ is a homeomorphism. To prove $\gamma_\#$ is
continuous, consider the following commutative diagram
\[\begin{picture}(240,50)
\put(0,40){$Hom((I^n,\dot{I}^n),(X,x))$}
\put(150,40){$Hom((I^n,\dot{I}^n),(X,y))$}
\put(30,0){$\pi_n^{top}(X,x)$} \put(175,0){$\pi_n^{top}(X,y),$}
\put(100,44){\vector(1,0){45}} \put(80,4){\vector(1,0){90}}
\put(50,36){\vector(0,-1){25}} \put(200,36){\vector(0,-1){25}}
\put(205,20){\small $p_n$}\put(40,20){\small $p_n$}
\put(120,47){\small $\tilde{\gamma}$} \put(120,7){\small
$\gamma_\#$}
\end{picture}\]
 where $\tilde{\gamma}(\alpha)=A_\alpha\circ r(-,1)$ for all based
$n$-loop $\alpha$ at $x$ in $X$. For all basis elements $\langle
K,U\rangle$ of $Hom((I^n,\dot{I}^n),(X,x))$ we have
\begin{eqnarray*}
\tilde{\gamma}(\langle K,U\rangle)&=&\{\tilde{\gamma}|\
\alpha:(I^n,\dot{I}^n)\rightarrow(X,x),\ \alpha(K)\subseteq
U\}\\
&=&\{A_\alpha\circ r(-,1)|\ \alpha:(I^n,\dot{I}^n)\rightarrow(X,x),
\alpha(K)\subseteq U\}\\
&\subseteq&\{\beta|\ \beta:(I^n,\dot{I}^n)\rightarrow(X,y),\
\beta(K)\subseteq
U\}\\
&=&\langle K,U\rangle.
\end{eqnarray*}
 Hence $\tilde{\gamma}$ is continuous and so is $\gamma_\#$. It is
easy to see that $(\gamma^{-1})_\#$ is the inverse of $\gamma_\#$
and continuous. So the result holds.
\end{proof}
It is known that $\pi_n$ is a functor from the homotopy category of
based spaces to the category of groups. So it is natural to ask
whether $\pi_n^{top}$ is a functor. Suppose $f:(X,x)\rightarrow
(Y,y)$ is a pointed continuous map. It is enough to show that the
induced homomorphism
$f_*:\pi_n^{top}(X,x)\rightarrow\pi_n^{top}(Y,y)$ is continuous.
Consider the following commutative diagram
\[\begin{picture}(240,50)
\put(0,40){$Hom((I^n,\dot{I}^n),(X,x))$}
\put(150,40){$Hom((I^n,\dot{I}^n),(Y,y))$}
\put(30,0){$\pi_n^{top}(X,x)$} \put(175,0){$\pi_n^{top}(Y,y),$}
\put(100,44){\vector(1,0){45}} \put(80,4){\vector(1,0){90}}
\put(50,36){\vector(0,-1){25}} \put(200,36){\vector(0,-1){25}}
\put(205,20){\small $p_n$}\put(40,20){\small $p_n$}
\put(120,47){\small $f_\#$} \put(120,7){\small $f_*$}
\end{picture}\]
 where $f_\#(\alpha)=f\circ\alpha$. Clearly $f_\#^{-1}(\langle
K,U\rangle)=\langle K,f^{-1}(U)\rangle$ for all basis elements
$\langle K,U\rangle$ in $Hom((I^n,\dot{I}^n),(X,x))$. Since $f$ is
continuous, so are $f_\#$ and $f_*$. Hence we can consider
$\pi_n^{top}$ as a functor from $hTop^*$ to the category of
topological groups. Now, we intend to show that the functor
$\pi_n^{top}$ preserves the direct product of spaces.
\begin{theorem}
Let $\{(X_i,x_i)|i\in I\}$ be a family of  pointed spaces. Then
\[\pi_n^{top}(\prod_{i\in I}(X_i,x_i))\cong\prod_{i\in I}\pi_n^{top}(X_i,x_i).\]
\end{theorem}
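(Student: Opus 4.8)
The plan is to realize both sides as quotients of a single mapping space and compare the two quotient topologies through one commutative square. Write $X=\prod_{i\in I}X_i$ with basepoint $x=(x_i)_{i\in I}$, let $pr_i:X\to X_i$ be the projections and $\iota_j:X_j\to X$ the insertions that are the identity in the $j$th coordinate and constant $x_i$ elsewhere; all of these are pointed continuous maps. First I would invoke the standard fact that the compact-open mapping functor $Hom((I^n,\dot{I}^n),-)$ carries products to products, so that
\[\Phi:Hom((I^n,\dot{I}^n),(X,x))\longrightarrow\prod_{i\in I}Hom((I^n,\dot{I}^n),(X_i,x_i)),\qquad \Phi(f)=(pr_i\circ f)_{i\in I},\]
is a homeomorphism. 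On underlying sets the classical fact that $\pi_n$ commutes with products supplies a group isomorphism $\theta:\pi_n(X,x)\to\prod_{i\in I}\pi_n(X_i,x_i)$, $\theta([f])=([pr_i\circ f])_{i\in I}$. These fit into a commutative square with $\Phi$ along the top, $\theta$ along the bottom, the quotient map $p_n$ on the left, and the product $\prod_{i\in I}p_n^i$ of the coordinate quotient maps on the right; commutativity is immediate from the definitions.

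Continuity of $\theta$ is the easy half and needs nothing new: the $i$th coordinate of $\theta$ is exactly the induced homomorphism $(pr_i)_*:\pi_n^{top}(X,x)\to\pi_n^{top}(X_i,x_i)$, which is continuous by the functoriality of $\pi_n^{top}$ established above, and a map into a product is continuous once all its coordinates are. Equivalently, since $p_n$ is a quotient map and $\theta\circ p_n=(\prod_{i\in I}p_n^i)\circ\Phi$ is continuous, $\theta$ is continuous.

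For a \emph{finite} index set the inverse is also clean, and I would produce it explicitly rather than argue about quotient topologies. Set
\[\psi:\prod_{i\in I}\pi_n^{top}(X_i,x_i)\longrightarrow\pi_n^{top}(X,x),\qquad \psi\big(([\alpha_i])_{i\in I}\big)=\prod_{i\in I}(\iota_i)_*[\alpha_i],\]
the product on the right being taken in the topological group $\pi_n^{top}(X,x)$. Each $(\iota_i)_*$ is continuous by functoriality and the multiplication is continuous because $\pi_n^{top}(X,x)$ is a topological group, so $\psi$ is continuous; a coordinatewise check, using that $pr_j\circ\iota_i$ is the identity when $i=j$ and constant otherwise, shows $\psi=\theta^{-1}$. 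This settles the theorem whenever $I$ is finite.

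The hard part will be the infinite case, where the finite sum defining $\psi$ is unavailable and one must instead show that the right edge $\prod_{i\in I}p_n^i$ of the square is itself a quotient map; then $\theta^{-1}$ is continuous because $\theta^{-1}\circ(\prod_{i\in I}p_n^i)=p_n\circ\Phi^{-1}$ is continuous. Since a product of quotient maps need not be a quotient map, the genuine issue is to upgrade each $p_n^i$ to an \emph{open} surjection, a product of open surjections being open and hence a quotient map. Here I would use that a based homotopy rel $\dot{I}^n$ is precisely a path in $Hom((I^n,\dot{I}^n),(X_i,x_i))$, so the fibres of $p_n^i$ are its path components; the saturation of an open set is then the union of the path components it meets, and this is open as soon as the path components are open, i.e. when the mapping space is locally path connected. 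Establishing this local path connectedness of the mapping spaces is exactly the delicate point I expect to be the main obstacle, and it is where hypotheses such as metrizability or local $n$-connectedness of the factors would have to enter.
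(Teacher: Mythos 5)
Your setup coincides with the paper's: identify $Hom((I^n,\dot{I}^n),(\prod_{i\in I}X_i,x))$ with $\prod_{i\in I}Hom((I^n,\dot{I}^n),(X_i,x_i))$ and reduce the whole question to whether the product $Q=\prod_{i\in I}p_n^i$ of the coordinate quotient maps is itself a quotient map. Your finite-index argument (the explicit inverse $\psi(([\alpha_i])_i)=\prod_i(\iota_i)_*[\alpha_i]$, continuous by functoriality of $\pi_n^{top}$ and by Theorem 2.1) is correct and is a pleasant alternative that the paper does not give. But the theorem is stated for an arbitrary family, and for infinite $I$ you stop exactly at the crux: you never prove that $Q$ is a quotient map; you only identify what would suffice.

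Moreover, the route you sketch for closing that gap cannot succeed at the stated level of generality. Since the fibres of $p_n^i$ are precisely the path components of $Hom((I^n,\dot{I}^n),(X_i,x_i))$, openness of those path components is \emph{equivalent} to discreteness of $\pi_n^{top}(X_i,x_i)$. So your plan to make each $p_n^i$ an open map in this way silently assumes that every factor has discrete topological $n$-th homotopy group --- a hypothesis nowhere in the statement, and one which the paper's own Examples 3.3, 3.4 and 3.11 show fails already for quite innocuous metric subspaces of $\mathbb{R}^3$; the same goes for importing metrizability or local $n$-connectedness, which would turn the theorem into a much weaker statement. The paper instead attacks $Q$ directly, with no hypotheses on the $X_i$: given a saturated open set $Q^{-1}(U)\subseteq M$, it considers maximal basic open boxes $V\times\prod_{i\notin J}Hom((I^n,\dot{I}^n),(X_i,x_i))$ contained in $Q^{-1}(U)$ and argues that each such maximal box is itself saturated, of the form $Q^{-1}(\prod_{i\in I}U_i)$ for a basic open $\prod_{i\in I}U_i$ of $N$, whence $U$ is a union of open sets. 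One may well question how watertight that maximality argument is --- your instinct that products of quotient maps are genuinely delicate is sound --- but it is an argument aimed at precisely the step your proposal leaves open, and without something in its place you have proved only the finite case of the theorem.
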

\begin{proof}
Consider the commutative diagram
\[\begin{picture}(300,55)
\put(0,40){$M=\prod_{i\in I}Hom((I^n,\dot{I}^n),(X_i,x_i))$}
\put(200,40){$Hom((I^n,\dot{I}^n),(X_i,x_i))$}
\put(30,0){$N=\prod_{i\in I}\pi_n^{top}(X_i,x_i)$}
\put(210,0){$\pi_n^{top}\left(\prod_{i\in I}(X_i,x_i)\right),$}
\put(152,42){\vector(1,0){45}}\put(197,46){\vector(-1,0){45}}
\put(130,2){\vector(1,0){75}}\put(205,6){\vector(-1,0){75}}
\put(70,36){\vector(0,-1){25}} \put(250,36){\vector(0,-1){25}}
\put(255,20){\small $p_n$}\put(20,20){\small $Q=\Pi_{i\in I}p_n^i$}
\put(170,50){\small $\psi$} \put(170,10){\small $\varphi$}
\end{picture}\]
 where $\psi$ and $\varphi$ are natural homomorphism and
isomorphism, respectively (see [8]). To show that $\varphi$ is a
homeomorphism, it is enough to show that $Q$ is a quotient map. Let
$Q^{-1}(U)$ be an open subset of $M$ for some subset $U$ of $N$. By
product topology of $M$, $Q^{-1}(U)$ is the union of basic opens of
the form $\prod_{i\in I}V_i$, where $V_i$'s are open in
$Hom((I^n,\dot{I}^n),(X_i,x_i))$ and
$V_i=Hom((I^n,\dot{I}^n),(X_i,x_i))$ for all $i\notin J$, for some
finite subset $J$ of $I$. Suppose $V=\prod_{i\in J}V_i'$ is a
maximal open set having the property that $W=V\times\prod_{i\notin
J}Hom((I^n,\dot{I}^n),(X_i,x_i))$ is a subset of $Q^{-1}(U)$. Since
$W$ is open and maximal, it is of the form $Q^{-1}\left(\prod_{i\in
I}U_i\right)$, where $\prod_{i\in I}U_i$ is a basic open in $N$.
Since $Q^{-1}(U)$ is constructed with open sets like the above, we
have covered $U$ with open sets and so $U$ is open.
\end{proof}

\section{The topology of $\pi_n^{top}(X)$}
In this section, we are going to study more on the topology of
$\pi_n^{top}(X)$, specially we intend to find necessary and
sufficient conditions for which $\pi_n^{top}(X)$ is discrete.\\
\begin{definition}
A topological space $X$ is called $n$-semilocally simply connected
if  for each $x\in X$ there exists an open neighborhood $U$ of $x$
for which any $n$-loop in $U$ is nullhomotopic in $X$. In other
words the induced homomorphism of the inclusion
$i_*:\pi_n(U,x)\rightarrow\pi_n(X,x)$ is zero.
\end{definition}
\begin{theorem}
If $\pi_n^{top}(X)$ is discrete, then $X$ is $n$-semilocally simply
connected.
\end{theorem}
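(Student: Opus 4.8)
The plan is to read off the local condition directly from the definition of the quotient topology, using the explicit shape of basic open sets in the compact-open topology. Fix a point $x\in X$; by Theorem 2.2 the discreteness of $\pi_n^{top}(X)$ transfers to $\pi_n^{top}(X,x)$ across the path component of $x$, so I may assume $\pi_n^{top}(X,x)$ is discrete. Write $c_x$ for the constant $n$-loop at $x$, whose class $[c_x]$ is the identity of $\pi_n(X,x)$. Since the topology is discrete, the singleton $\{[c_x]\}$ is open, and by the defining property of the quotient map $p_n$ its preimage $p_n^{-1}(\{[c_x]\})$ is open in $Hom((I^n,\dot{I}^n),(X,x))$. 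The first key observation is that this preimage is exactly the set $N$ of based $n$-loops at $x$ that are nullhomotopic in $X$, and it contains $c_x$.

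Next I would translate the statement that $N$ is an open neighborhood of $c_x$ into concrete data. Because the sets $\langle K,U\rangle$ form a subbase for the compact-open topology, there exist finitely many nonempty compact sets $K_1,\ldots,K_m\subseteq I^n$ and open sets $U_1,\ldots,U_m\subseteq X$ with
\[c_x\in\bigcap_{i=1}^m\langle K_i,U_i\rangle\subseteq N.\]
Since $c_x(K_i)=\{x\}\subseteq U_i$, we get $x\in U_i$ for every $i$, so $U:=\bigcap_{i=1}^m U_i$ is an open neighborhood of $x$. This $U$ is the candidate witnessing $n$-semilocal simple connectivity at $x$.

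It then remains to verify that every based $n$-loop with image in $U$ is nullhomotopic in $X$. Given such a loop $\alpha$, composed with the inclusion $U\hookrightarrow X$, one has $\alpha(K_i)\subseteq U\subseteq U_i$ for each $i$, hence $\alpha$ lies in $\bigcap_{i=1}^m\langle K_i,U_i\rangle\subseteq N$ and is therefore nullhomotopic in $X$. This is precisely the assertion that $i_*:\pi_n(U,x)\to\pi_n(X,x)$ is the zero map, and since $x$ was arbitrary, $X$ is $n$-semilocally simply connected.

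The argument is short, and the step I expect to be the real crux is the final one: recognizing that there is no need for the compact sets $K_i$ to cover $I^n$. A loop landing entirely inside $U=\bigcap_i U_i$ automatically satisfies all of the subbasic constraints $\langle K_i,U_i\rangle$ at once, regardless of how the $K_i$ are positioned, and it is exactly this simultaneous fit that forces the loop into $N$. The only bookkeeping subtlety elsewhere is handling the base point, which Theorem 2.2 resolves by making discreteness a path-component invariant.
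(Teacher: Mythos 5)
Your proof is correct and is essentially the paper's own argument: both use discreteness to make the set of nullhomotopic $n$-loops an open neighborhood of the constant loop, write a basic neighborhood $\bigcap_{i=1}^m\langle K_i,U_i\rangle$ inside it, and take $U=\bigcap_{i=1}^m U_i$ as the witness, noting that any $n$-loop in $U$ satisfies all the subbasic constraints simultaneously. Your write-up is in fact a bit more careful than the paper's (you justify $x\in U_i$ and identify the preimage of the identity class explicitly), but the route is the same.
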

\begin{proof}
For each $x\in X$, since $\pi_n^{top}(X,x)$ is discrete, there
exists an open neighborhood $W$ in $Hom((I^n,\dot{I}^n),(X,x))$ of
the constant $n$-loop at $x$ such that each element of $W$ is
homotopic to the constant loop at $x$. By compact-open topology, we
can consider $W$ as $\cap_{i=1}^m\langle K_i,U_i\rangle$, where
$K_i$'s are compact subsets of $I^n$ and $U_i$'s are open in $X$.
Consider $U=\cap_{i=1}^m U_i$ as a nonempty open neighborhood, then
$\langle I^n,U\rangle\subseteq W$. Therefore any $n$-loop in $U$ at
$x$ belongs to $W$ and so is nullhomotopic in $X$. Hence $X$ is
$n$-semilocally simply connected.
\end{proof}
Note that the following examples show the inverse of Theorem 3.2 is
not true, in general. In both of them, we use the fact that the
compact-open topology on $Hom((I^2,\dot{I}^2),(X,0))$ is equivalent
to the uniformly convergence topology when $X$ is a metric space
[6].
\begin{example}
Let $X=\cup_{n\in N}S_{n}$, where
\[\ S_1=\{(x,y,z)\mid(x-\frac{1}{2})^{2}+y^{2}+z^{2}=\frac{1}{4}\},\]
\[\ S_n=\{(x,y,z)\mid(x-\frac{n-1}{2n})^{2}+y^{2}+z^{2}=(\frac{n-1}{2n})^{2}\},\]
\ \\
for each $n\geq2$. Then $\{S_{n}\}$ as a sequence of 2-loops in $X$
at $p=(0,0,0)$ uniformly converges to $S_1$. Now [$S_1$] is a limit
point in $\pi_2^{top}(X,x)$, nevertheless $X$ is 2-semilocally
simply connected.
\end{example}
\begin{example}
Let $X$ denotes the following subspace of $\mathbb{R}^3$:
\[X=[0,1]\times[0,1]\times\{0,1\}\cup[0,1]\times\{0,1\}\times[0,1]\cup\{0,1,\frac{1}{2},\frac{1}{3},\ldots\}\times[0,1]\times[0,1]\]
Let $p=(0,0,0)$. Consider the following sequence of $2$-loops at $p$
\[X_n=[0,\frac{1}{n}]\times[0,1]\times\{0,1\}\cup[0,\frac{1}{n}]\times\{0,1\}\times[0,1]\cup\{\frac{1}{n}\}\times[0,1]\times[0,1].\]
Obviously, this sequence is uniformly convergent to the
nullhomotopic loop $X_0=\{0\}\times[0,1]\times[0,1]$. Thus
$\pi_2^{top}(X)$ is not discrete, however one can see that $X$ is 2-
semilocally simply connected.
\end{example}
\newpage


\begin{figure}[t]
\centerline{\includegraphics[height=12cm,width=17cm]{cube.bmp}}
\vspace{-2cm} figure-1
\end{figure}


We recall the following definitions in [9].
\begin{definition}
A space $X$ is said to be $n$-connected for $n\geq0$ if it is path
connected and $\pi_{k}(X,x)$ is trivial for every base point $x\in
X$ and $1\leq k \leq n$. $X$ is called locally $n$-connected if for
each $x\in X$ and each neighborhood  $U$ of $x$, there is a
neighborhood $V \subseteq U \subseteq X$ containing $x$ so that
$\pi_{k}(V)\longrightarrow \pi_{k}(U)$ is zero map for all $0 \leq k
\leq n$ and for all basepoint in $V$.
\end{definition}
\begin{theorem}
Let $X$ be a locally $n$-connected metric space. Then for any $x\in
X$, $\pi_n^{top}(X,x)$ is discrete.
\end{theorem}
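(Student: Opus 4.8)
The plan is to exploit that, since $X$ is metric, the compact-open topology on $Hom((I^n,\dot{I}^n),(X,x))$ coincides with the topology of uniform convergence [6], so a basic neighborhood of an $n$-loop $\alpha$ has the form $B(\alpha,\varepsilon)=\{\beta:\sup_{s\in I^n}d(\alpha(s),\beta(s))<\varepsilon\}$. To prove that the quotient $\pi_n^{top}(X,x)$ is discrete it suffices to show that every fiber $p_n^{-1}([\alpha])$ is open, and for this I would establish the key assertion: \emph{for each $n$-loop $\alpha$ there is $\varepsilon>0$ such that every $\beta\in B(\alpha,\varepsilon)$ is homotopic to $\alpha$ rel $\dot{I}^n$.} Granting this, $p_n^{-1}([\alpha])$ is a union of such balls, hence open, so each singleton of the quotient is open and $\pi_n^{top}(X,x)$ is discrete. (Alternatively, since $\pi_n^{top}(X,x)$ is a topological group by Theorem 2.1, it would be enough to treat nullhomotopic $\alpha$, the identity being then isolated; but the construction below is uniform in $\alpha$.)

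To set up the homotopy I would first manufacture, for each point $q$ of the compact set $\alpha(I^n)$, a nested chain of open neighborhoods
\[q\in W_0^q\subseteq W_1^q\subseteq\cdots\subseteq W_{n+1}^q\]
in which each $W_k^q$ is $n$-connected in $W_{k+1}^q$, i.e. $\pi_j(W_k^q)\to\pi_j(W_{k+1}^q)$ is trivial for all $0\le j\le n$; such chains are produced by successive applications of the local $n$-connectedness of $X$. Covering $\alpha(I^n)$ by finitely many of the innermost sets $W_0^{q_1},\dots,W_0^{q_m}$ and letting $\lambda$ be a Lebesgue number of the open cover $\{\alpha^{-1}(W_0^{q_i})\}$ of $I^n$, I would subdivide $I^n$ into a cubical complex $K$ so fine that the closed star of every cell is carried by $\alpha$ into a single $W_0^{q_i}$. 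Finally, since $\alpha$ maps the relevant compact pieces into the \emph{open} sets $W_1^{q_i}$, a compactness argument yields a uniform $\varepsilon>0$ such that for $\beta\in B(\alpha,\varepsilon)$ the two maps $\alpha$ and $\beta$ together send the star of each cell into one chain $W_0^{q}\subseteq\cdots\subseteq W_{n+1}^{q}$.

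With these data in hand I would build the desired homotopy $H:I^n\times I\to X$ with $H(-,0)=\alpha$, $H(-,1)=\beta$ and $H\equiv x$ on $\dot{I}^n\times I$ by induction over the prisms $\sigma\times I$ of the product cell structure on $|K|\times I$, in order of increasing $\dim\sigma$. Over a vertex $v$ I join $\alpha(v)$ to $\beta(v)$ by a path inside $W_1^{q}$ (using triviality on $\pi_0$); over a $k$-cell $\sigma$ the map $H$ is already prescribed on $\partial(\sigma\times I)=(\sigma\times\{0,1\})\cup(\partial\sigma\times I)$, a $k$-sphere lying in $W_k^{q}$, and I extend it across the $(k+1)$-ball $\sigma\times I$ into $W_{k+1}^{q}$ because that sphere is nullhomotopic there, by triviality of $\pi_k(W_k^q)\to\pi_k(W_{k+1}^q)$; cells contained in $\dot{I}^n$ are sent constantly to $x$. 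The top case $k=n$ is exactly where the hypothesis $\pi_n(W_n^q)\to\pi_n(W_{n+1}^q)=0$ is used, so local $n$-connectedness is precisely what makes the construction close up.

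The main obstacle is bookkeeping the neighborhoods so that the inductive extension is genuinely consistent: when filling $\sigma\times I$ the boundary data comes from extensions already made over the faces of $\sigma$, which were placed in chains attached to possibly different base points $q_i$, and I must guarantee that all of them lie in one chain before invoking $n$-connectedness. This is what forces the star-refinement of $K$ and the uniform $\varepsilon$ above: once every cell together with its star maps into a single $W_0^{q}$, and all previous fillings over that star were taken inside the corresponding $W_\bullet^{q}$, the sphere on $\partial(\sigma\times I)$ really does lie in $W_k^{q}$ and can be capped in $W_{k+1}^{q}\subseteq X$. Checking that the finitely many chains and the Lebesgue number can be chosen coherently, so that the fillings never escape the chain indexing a given cell, is the technical heart of the argument; the remainder is the standard skeleton-by-skeleton extension of a map already defined on a subcomplex.
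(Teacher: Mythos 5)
Your proposal follows essentially the same route as the paper's proof: pass to the uniform-convergence topology (legitimate since $X$ is metric), reduce discreteness to the claim that any $n$-loop sufficiently close to a given $\alpha$ is based-homotopic to it, subdivide $I^n$ into small cubes, build the homotopy over the prisms $\sigma\times I$ by induction on the dimension of $\sigma$ using local $n$-connectedness at each capping step, and finish with the gluing lemma. The substantive difference is how local $n$-connectedness is packaged. The paper extracts a single $\epsilon>0$ (by compactness of $Im(f)$) such that every $m$-loop, $m\le n$, of diameter $<\epsilon$ based at a point of $Im(f)$ is nullhomotopic in $X$, and then fills the successive rectangles ``by induction''; this gives no control on where those nullhomotopies live, so the spheres appearing at the next stage of its induction need not be small, and the induction as literally written does not visibly close. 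Your nested chains $W_0^q\subseteq\cdots\subseteq W_{n+1}^q$, with each inclusion $\pi_j$-trivial for $j\le n$, are exactly the device needed to make each individual capping step legitimate, so on this point your sketch is more careful than the paper's own proof.

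That said, the coherence problem you yourself flag as ``the technical heart'' is a real gap in your write-up, and with the chains chosen as arbitrary open sets it cannot be closed by star-refinement and a Lebesgue number alone: the filling over a face $\tau$ of $\sigma$ is only known to lie in $W_{\dim\tau+1}^{q_\tau}$, and nothing places that set inside $W_k^{q_\sigma}$ when $\tau$ and $\sigma$ are indexed by different chains, since distinct chains are in no containment relation with one another. The standard repair is metric rather than combinatorial: using compactness of $Im(\alpha)$ and local $n$-connectedness, one shows that for every $\epsilon>0$ there is $\delta>0$ such that every $j$-sphere ($j\le n$) of diameter $<\delta$ lying near $Im(\alpha)$ bounds a $(j+1)$-ball of diameter $<\epsilon$ lying near $Im(\alpha)$; one then fixes a ladder $\delta_0<\delta_1<\cdots<\delta_{n+1}$, each $\delta_{k+1}$ serving as the $\epsilon$ for $\delta_k$, and runs your prism induction measuring everything by diameter instead of by membership in a fixed chain. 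Diameter bounds, unlike membership in a fixed open set, are inherited by unions of overlapping small sets, so the boundary sphere of each prism is automatically small and the induction closes. With that modification your argument is complete --- and it is the same modification that the paper's own proof silently requires.
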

\begin{proof}
We know that $\pi_n^{top}(X,x)$ is the set of path components of
loop space $\Omega^{n}(X,x)$ topologized with the quotient topology
under the canonical surjection $p_{n}$ satisfying
$p_{n}(f)=p_{n}(g)$ if and only if the n-loops $f$ and $g$ belong to
the same path component of  $\Omega^{n}(X,x)$, see[7,Lemma 2.5.5].
To prove $\pi_n^{top}(X,x)$ is discrete,it is sufficient to show
that the path components of $\Omega^n(X,x)$ are open. Suppose $f\in
\Omega^n(X,x)$ and $f_k\rightarrow f$ uniformly. We must prove that
$f$ and $f_k$ are homotopic, for sufficiently large $k$.

 Since $X$ is locally $n$-connected and $Im(f)$ is compact, so
there exists $\epsilon > 0$ such that if $x\in Im(f)$ and $\alpha_x$
is an $m$-loop based at $x$ $(m\leq n)$, with
$diam(\alpha_x)<\epsilon$, then $\alpha_x$ is null-homotopic.

Since $f\cup\{f_1,f_2,\cdots\}$ is an equicontinuous collection of
maps and $Im(f)$ and $Im(f_k)$'s are compact, then for the
$\epsilon$ as above, there exists $\delta>0$ such that each subcube
$\widetilde{I}\subseteq I^n$ with $diam(\widetilde{I})<\delta$, the
images $f(\widetilde{I})$ and $f_k(\widetilde{I})$'s have diameters
less than $\epsilon$. Take a partition $\{I_1^n,\cdots,I_l^n\}$ of
$I^n$, with $diam(I_i^n)<\delta$, $i=1,\cdots,l$, then
$diam(f(I_i^n))<\epsilon$ and $diam(f_k(I_i^n))<\epsilon$.

Let $v_{j}^{i}, j=1,...,2^{n}$ be the vertices of $I_{i}^{n}$.
First, by local path connectivity of $X$, we can connect the
vertices $f(v_{j}^{i})$ and $f_{k}(v_{j}^{i})$ by small path, for
each $i=1,...,l$ and $j=1,...,2^{n}$ and sufficiently large $k$. The
boundary of the rectangles with corners  $f(v_{j}^{i})$ and
$f_{k}(v_{j}^{i})$ induce 1-loops which are homotopic to the
constant loop. By local $n$-connectivity of $X$, we can fill in the
homotopy across the sides of these rectangles. Similarly and by
induction on $k$, we construct inessentials $k$-loops for each
$k\leq n$ and then again fill in the homotopy across the sides of
induced $k$-rectangles. In this way, in the $n$th step, we obtain a
homotopy from $f\mid I_{i}^{n}$ to $f_{k}\mid I_{i}^{n}$. Now, the
gluing lemma yields a homotopy from $f$ to $f_{k}$.
\end{proof}
With the added assumption that $X$ is locally $(n-1)$-connected, the
inverse of the Theorem 3.2 holds.
\begin{theorem}
Suppose $X$ is a locally $(n-1)$-connected metrizable space and
$x\in X$. Then the following are equivalent:

(1) $\pi_{n}^{top}(X,x)$ is discrete.

(2) $X$ is $n$-semilocally simply connected at $x$.
\end{theorem}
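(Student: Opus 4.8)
The implication $(1)\Rightarrow(2)$ is nothing but Theorem 3.2, whose proof uses no local connectivity hypothesis at all; so all the content lies in $(2)\Rightarrow(1)$, which I would treat as a variant of the proof of Theorem 3.6. First I would reduce discreteness to an openness statement in the loop space. Since $\pi_n^{top}(X,x)$ is a topological group (Theorem 2.1), left translations are homeomorphisms, so the group is discrete as soon as the singleton of the class of the constant $n$-loop $c_x$ is open; by definition of the quotient topology this is equivalent to $p_n^{-1}([c_x])$ being open in $\Omega^n(X,x)=Hom((I^n,\dot{I}^n),(X,x))$, and by the cited [7, Lemma 2.5.5] this set is exactly the path component of $c_x$. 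I would in fact show that \emph{every} path component of $\Omega^n(X,x)$ is open, which certainly covers the identity component. Because $X$ is metrizable, the compact-open topology on $\Omega^n(X,x)$ agrees with the topology of uniform convergence and is metrizable, so openness of the path components is equivalent to the sequential assertion that whenever $f_k\to f$ uniformly one has $f\simeq f_k$ for all large $k$. This is precisely the reformulation used in Theorem 3.6, and I would reprove it under the present weaker hypothesis by running the same cell-by-cell scheme and changing only the top-dimensional step.

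The construction itself I would borrow verbatim from Theorem 3.6 up through the $(n-1)$-skeleton. Using equicontinuity of the collection $f\cup\{f_1,f_2,\dots\}$ and compactness of the images, I would fix $\epsilon>0$, then $\delta>0$ and a partition $\{I_1^n,\dots,I_l^n\}$ of $I^n$ into subcubes of diameter $<\delta$ for which $f(I_i^n)$ and $f_k(I_i^n)$ have diameter $<\epsilon$ for large $k$. I would then build a homotopy $H:I^n\times I\to X$ from $f$ to $f_k$, constant on $\dot{I}^n\times I$, by extending over the product cell structure on $I^n\times I$ skeleton by skeleton in the $I^n$-direction. At the vertices local path connectivity joins $f(v_j^i)$ to $f_k(v_j^i)$ by short paths; over a $d$-cell $\sigma$ with $1\le d\le n-1$ the map already defined on $\partial(\sigma\times I)$ is a small $d$-sphere, which local $(n-1)$-connectivity fills inside a slightly larger but still small neighborhood (this is where the hypothesis drops from $n$-connected, as in Theorem 3.6, to $(n-1)$-connected). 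Since each filling stays small, the boundary spheres presented at the next stage remain of diameter comparable to $\epsilon$; this is the identical size-bookkeeping carried out in Theorem 3.6, so I would invoke it rather than repeat it.

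The single genuinely new point is the top-dimensional step, and I expect the main difficulty to sit there. After the $(n-1)$-skeleton has been treated, each remaining prism $\sigma\times I$ over an $n$-cell $\sigma$ carries on its boundary a map $S^n\to X$ of small diameter, and local $(n-1)$-connectivity says nothing about filling it. Here I would appeal to $n$-semilocal simple connectedness: the point is merely that such a boundary $n$-sphere is \emph{nullhomotopic in $X$}, so the prism can be filled, even though the filling is not small. To have the hypothesis available wherever these spheres sit, I would read condition $(2)$ in the sense of Definition 3.1 applied along the compact set $Im(f)$: covering $Im(f)$ by neighborhoods in which $n$-loops die in $X$ and passing to a Lebesgue number yields one scale below which every $n$-loop meeting $Im(f)$ bounds in $X$, and shrinking $\epsilon$ to this scale makes each boundary $n$-sphere nullhomotopic. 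Gluing the fillings over all cells by the gluing lemma produces $H$, whence $f\simeq f_k$. The two delicate issues to keep in view are precisely that the nullhomotopy must be available near each point of $Im(f)$ and not only at the basepoint (which is why local $(n-1)$-connectivity, assumed at every point, is combined with the global form of Definition 3.1), and that, in contrast with Theorem 3.6, no control on the diameter of this top-dimensional filling is available — nor is it needed, since we only require the existence of a homotopy.
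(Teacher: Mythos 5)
The paper states this theorem with no proof at all — it appears immediately after the sentence announcing that the converse of Theorem 3.2 holds under the added hypothesis — so there is no argument of the authors to compare yours against; what you have written is surely the intended argument, and it is correct. Your decomposition is the natural one: $(1)\Rightarrow(2)$ is Theorem 3.2 localized at $x$, and $(2)\Rightarrow(1)$ reruns the cell-by-cell homotopy construction of Theorem 3.6, with local $(n-1)$-connectivity supplying \emph{small} fillings of the boundary spheres of the prisms $\sigma\times I$ through dimension $n-1$, and the semilocal hypothesis supplying the fillings of the top-dimensional prisms. Your observation that no diameter control is needed at the top step — because nothing is built on top of those fillings — is exactly the reason the hypothesis of Theorem 3.6 can be weakened from local $n$-connectivity to local $(n-1)$-connectivity plus condition (2). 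Your remark that the argument in fact shows \emph{every} path component of $\Omega^n(X,x)$ is open also quietly removes any dependence on the topological group structure of Theorem 2.1, which is a merit.

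Two further remarks. First, your decision to read condition (2) in the global sense of Definition 3.1 (available along all of $Im(f)$, via a Lebesgue number) rather than literally at the single point $x$ is not a convenience but a necessity: under the literal one-point reading the statement is false. For $n=1$, let $X$ be the Hawaiian earring with an arc attached joining its wild point to a point $x$ at the far end; then $X$ is metrizable, locally path connected, and $1$-semilocally simply connected at $x$ (a small arc neighborhood of $x$ carries only nullhomotopic loops), yet $\pi_1^{top}(X,x)$ is not discrete, since the loops running along the arc and around the $k$-th circle converge uniformly to a nullhomotopic loop while remaining essential. So your proof establishes the corrected form of the theorem, which is the only form that can be true. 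Second, a small point you should write out: the neighborhoods furnished by Definition 3.1 kill $n$-loops \emph{based at their centers}, whereas your boundary $n$-spheres are based at nearby points of $Im(f)$; before extracting the Lebesgue number you should shrink each neighborhood to its path component of the center (open, since local $(n-1)$-connectivity includes local path connectedness), so that a change of basepoint inside the neighborhood transfers the nullhomotopy condition to the sphere at hand. With that detail added, the argument is complete.
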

The following result presents relationship between the cardinality
of $\pi_{n}(X,x)$ and discreteness of $\pi_{n}^{top}(X,x)$.
\begin{theorem}
Suppose $X$ is a connected separable metric space such that
$\pi_n^{top}(X,x)$ is discrete. Then $\pi_n(X,x)$ is countable.
\end{theorem}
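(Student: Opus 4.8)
The plan is to show that discreteness of $\pi_n^{top}(X,x)$ together with separability of the function space $Hom((I^n,\dot I^n),(X,x))$ forces $\pi_n(X,x)$ to be a countable set. The key observation is that the quotient map $p_n\colon Hom((I^n,\dot I^n),(X,x))\twoheadrightarrow\pi_n^{top}(X,x)$ is continuous and surjective, and that $\pi_n^{top}(X,x)$ carries the discrete topology by hypothesis. Since a continuous surjection cannot increase cardinality beyond what a dense subset permits when the target is discrete, I would exploit separability of the domain to bound the cardinality of the target.

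First I would verify that $Hom((I^n,\dot I^n),(X,x))$ is separable. Since $X$ is a separable metric space, $I^n$ is compact metric, and for compact domain and metric codomain the compact-open topology on the function space agrees with the topology of uniform convergence (the metrizable sup-metric), as already invoked in the examples above via reference [6]. A space of continuous maps from a compact metric space into a separable metric space, under the uniform metric, is itself a separable metric space; hence $Hom((I^n,\dot I^n),(X,x))$ admits a countable dense subset $D$. This is the step I expect to be the main obstacle, since one must be careful that the based function space (maps sending $\dot I^n$ to the single point $x$) inherits separability as a subspace of the full mapping space, which it does because subspaces of separable metric spaces are separable.

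Next, I would use discreteness crucially. Because $\pi_n^{top}(X,x)$ is discrete, every singleton $\{[\alpha]\}$ is open, so each fiber $p_n^{-1}([\alpha])$ is open in $Hom((I^n,\dot I^n),(X,x))$. These fibers are pairwise disjoint and their union is the whole (separable) space. A separable space cannot contain an uncountable family of pairwise disjoint nonempty open sets, since each such open set must meet the countable dense set $D$ in a point not shared with any other member of the family. Therefore the collection of fibers, equivalently the set $\pi_n(X,x)$, is countable.

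Finally I would assemble these pieces: the fibers of $p_n$ are indexed bijectively by the elements of $\pi_n(X,x)$, each fiber is a nonempty open subset of a separable space, and pairwise disjoint open sets in a separable space form an at most countable family. Hence $\pi_n(X,x)$ is countable, completing the proof. The argument parallels Fabel's original reasoning for $\pi_1$ in reference [5], with the passage from loops to $n$-loops being purely formal once the separability of the mapping space is established.
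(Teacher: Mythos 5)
Your proposal is correct and follows essentially the same route as the paper: both arguments establish separability of $Hom((I^n,\dot{I}^n),(X,x))$ and then convert discreteness of the quotient into countability of $\pi_n(X,x)$. The only cosmetic differences are that the paper proves separability by embedding $X$ in the Hilbert cube (which is precisely the standard proof of the general fact you cite about maps from a compact metric space into a separable metric space) and then transfers separability to $\pi_n^{top}(X,x)$ as a continuous image, concluding that a separable discrete space is countable, whereas you remain in the mapping space and count the pairwise disjoint open fibers of $p_n$; these two finishing moves are trivially equivalent.
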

\begin{proof}
Since $X$ is a separable metric space, it follows from the proof of
the Urysohn metrization theorem $[6, Theorem 4.1]$ that $X$ can be
embedded as a subspace of the Hilbert cube
$Q=\Pi_{i=1}^{\infty}[0,1]$. The space $Hom(I^n,Q)$ is separable and
metrizable, and hence the subspace $Hom((I^n,\dot{I}^n),(X,x))$ is
separable. Since $\pi_n(X,x)$ is the continuous image of
$Hom((I^n,\dot{I}^n),(X,x))$, the space $\pi^{top}_n(X,x)$ is
separable. In particular, if $\pi^{top}_n(X,x)$ is discrete, then
$\pi_n(X,x)$ is countable since $\pi^{top}_n(X,x)$ is the only dense
subspace of $\pi^{top}_n(X,x)$.
\end{proof}
\begin{corollary}
If $X$ is a connected, locally $n$-connected separable metric space,
then $\pi_n(X,x)$ is countable.
\end{corollary}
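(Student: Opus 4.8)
The plan is to obtain the statement as an immediate consequence of the two preceding results, Theorem 3.6 and Theorem 3.8, by checking that the hypotheses of each are met and then chaining them. The observation is that a connected, locally $n$-connected separable metric space simultaneously satisfies the hypothesis needed to force discreteness of $\pi_n^{top}(X,x)$ and the hypothesis needed to pass from that discreteness to countability of the underlying group $\pi_n(X,x)$.

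First I would apply Theorem 3.6. A separable metric space is in particular a metric space, and $X$ is assumed locally $n$-connected, so that theorem applies directly and yields that $\pi_n^{top}(X,x)$ is discrete for the given base point $x$. This is where the genuine work already sits: the cube-by-cube filling argument of Theorem 3.6 (partition $I^n$ into subcubes of diameter less than $\delta$, use equicontinuity together with local $n$-connectivity to build a homotopy on each subcube, and glue) is precisely what makes the path components of $\Omega^n(X,x)$ open. With discreteness in hand, I would then feed it into Theorem 3.8: since $X$ is connected and separable metric and $\pi_n^{top}(X,x)$ is now known to be discrete, that theorem applies and gives that $\pi_n(X,x)$ is countable, the substance there being the Urysohn embedding of $X$ into the Hilbert cube, which forces $Hom((I^n,\dot{I}^n),(X,x))$ and hence $\pi_n^{top}(X,x)$ to be separable.

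I do not expect any real obstacle, since the corollary adds no content beyond its two parents; the only thing to confirm is that the single hypothesis list compatibly feeds both theorems. Theorem 3.6 requires ``locally $n$-connected metric,'' which is implied by our assumptions, while Theorem 3.8 requires ``connected separable metric'' together with discreteness, the first half of which is given and the second half of which is produced by the previous step. Thus the corollary is a straightforward bookkeeping combination of Theorems 3.6 and 3.8.
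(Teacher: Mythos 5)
Your proposal is correct and is exactly the paper's own argument: the paper proves this corollary by citing Theorems 3.6 and 3.8, precisely the chaining you describe (local $n$-connectedness plus metrizability gives discreteness of $\pi_n^{top}(X,x)$, then connectedness and separability convert discreteness into countability of $\pi_n(X,x)$). Your hypothesis-checking is accurate and adds nothing beyond what the paper's one-line proof implicitly assumes.
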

\begin{proof}
By Theorems 3.6 and 3.8, the result follows immediately.
\end{proof}
\begin{remark}
It is known that if $X$ is path connected, locally path connected,
semilocally simply connected and $p:\tilde{X}\rightarrow X$ is a
covering space of $X$, then
$p_*:\pi_1^{top}(\tilde{X})\rightarrow\pi_1^{top}(X)$ is an
embedding, see [1,4,5]. However, by our results, if $X$ is locally
$n$-connected
 then
$p_*:\pi_n^{top}(\tilde{X})\rightarrow\pi_n^{top}(X)$ is also an
embedding.
\end{remark}
\ \ \ Recall a well-known theorem in Algebraic Topology [8]
asserting the isomorphism
$$\pi_{n}(X,x)\cong\pi_{1}(\Omega^{n-1}(X,x),\widetilde{x}),$$ where $\widetilde{x}$ is the constant
loop $\widetilde{x}(t)=x$ and $\Omega^{n-1}(X,x)$ is the
$(n-1)$-loop space of $X$ at $x$ equipped with compact-open
topology. Using the above isomorphism we can consider another
topology on $\pi_{n}(X,x)$ induced by
$\pi_{1}^{top}(\Omega^{n-1}(X,x),\widetilde{x})$. We denote this
topological group by $ \pi_{n}^{\Omega}(X,x)$. Note that
$\pi_{n}^{\Omega}$ is the composition of the two functors
$\pi_{1}^{top}$ and $\Omega^{n-1}$ [1,6]. Therefore the operation
$\pi_{n}^{\Omega}$ is a functor from the homotopy category of
topological based spaces to the category of topological groups. Now,
it is natural and interesting to ask the relationship between the
two topological groups $\pi_{n}^{top}(X,x)$and
$\pi_{n}^{\Omega}(X,x)$. Consider the following diagram:
\[\begin{picture}(300,55)
\put(11,40){$Hom((I^n,\dot{I}^n),(X,x))$}
\put(180,40){$Hom((I,\dot{I}),(\Omega^{n-1}(X,x),\widetilde{x}))$}
\put(35,4){$\pi_{n}^{top}(X,x)$} \put(220,5){$\pi_n^{\Omega}(X,x),$}
\put(120,40){\vector(1,0){50}} \put(90,8){\vector(1,0){120}}
\put(50,36){\vector(0,-1){20}} \put(240,36){\vector(0,-1){20}}
\put(245,20){\small $q_2$} \put(55,20){\small $q_1$}
\put(145,45){\small $\psi$} \put(145,13){\small $\eta$}
\end{picture}\]
 where $q_1$ and $q_2$ are quotient maps which have been defined
and the mapping $\psi$ maps  $f:((I\times S^{n-1} )/\sim
,*)\longrightarrow (X,x)$ to
$f^{\#}:(I,\dot{I})\longrightarrow(\Omega^{n-1}(X,x),\widetilde{x})$,
for which $f^{\#}(t)=f^{\#}_{t}\in\Omega^{n-1}(X,x)$ and
$f^{\#}_{t}(z)=f([t,z])$ for each $z\in S^{n-1}$ ( note that
$(S^n,1)\thickapprox (I\times S^{n-1} )/\sim ,*)$ ). By [8, Theorem
11.12], there exists a bijection
$\eta:\pi_{n}^{top}(X,x)\longrightarrow\pi_n^{\Omega}(X,x)$ which
commutes the above diagram. Thus, in order to show that $\eta$ is a
homeomorphism, it is enough to show that $\psi$ is a homeomorphism.
Now suppose $X$ is a metric space, since $I$ is locally compact and
Hausdroff then the two compact-open topologies of the right hand
sight of the above diagram are equivalent to the uniform convergence
topology [6]. It is easy to see that a sequence $\{f_{n}\}$ is
convergent to $f$ in Hom$((I^{n},\dot{I}^{n}),(X,x))$ if and only if
the sequence $\{f_{n}^{\#}\}$ is convergent to $f^{\#}$ in
Hom$((I,\dot{I}),\Omega^{n-1}(X,x))$. Hence the two topological
groups $\pi_{n}^{top}(X,x)$ and $\pi_{n}^{\Omega}(X,x)$ are
isomorphic and so we can consider two topologies on $\pi_{n}(X,x)$
which are equivalent when $X$ is a metric space. By homeomorphism
$\pi_{n}^{top}(X,x)\cong\pi_{1}^{top}(\Omega^{n-1}(X,x))$, we have
the following assertions, (see [1,5]).

(a) Let $\Omega^{n-1}(X,x)$ be path connected, locally path
connected and semilocally simply connected. Then
$\pi_{n}^{top}(X,x)$ is discrete.

(b) Suppose $\Omega^{n-1}(X,x)$ is a connected separable space such
that $\pi_{n}^{top}(X,x)$  is discrete. Then $\pi_{n}(X,x)$ is
countable.

(c) Let $\Omega^{n-1}(X,x)$ be connected, locally path connected and
separable. Also, let $\pi_{n}(X,x)$ be free, then
$\pi_{n}^{top}(X,x)$ is discrete.

\ \\
 The following example shows that studying topological homotopy
groups may be more useful than topological fundamental groups.
\begin{example}\label{e-3}
Let $X=\cup_{n\in\mathbb{N}}S_n$, where
$S_n=\{(x,y,z)|(x-\frac{1}{n})^2+y^2+z^2=\frac{1}{n^2}\}$, be a
subspace of $\mathbb{R}^3$. It is easy to see that each $1$-loop in
$X$ is nullhomotopic (note that $S_n$'s are simply connected).
Therefore $\pi_1^{top}(X)$ is trivial.   the sequence $\{[S_n]\}$ is
convergent to identity element of $\pi_2^{top}(X,0)$, implying
that $\pi_2^{top}(X,0)$ is not discrete (see figure-2).\\
\end{example}
\begin{remark}
In general, if $\{X_i\}$ forms an inverse system of topological
spaces for which each $X_i$ contains an essential $n$-loop, then
$\pi_n^{top}\,(\displaystyle{\lim_\leftarrow} X_i) $ is not
discrete.
\end{remark}
\newpage
\begin{figure}[h]
    \centerline{\includegraphics[height=11cm,width=17cm]{sphere1.bmp}}
figure-2
        \end{figure}

\end{document}